\documentclass[12pt, twoside, leqno]{article}



\usepackage{amsmath,amsthm}
\usepackage{amssymb}
\usepackage[all,cmtip]{xy}

\usepackage{enumerate}

\usepackage{graphicx}



\pagestyle{myheadings}
\markboth{R.\ A.\ Bitan}{The genus of an integral quadratic form}



\newtheorem{thm}{Theorem}[section]
\newtheorem{cor}[thm]{Corollary}
\newtheorem{lem}[thm]{Lemma}
\newtheorem{prop}[thm]{Proposition}

\newtheorem{question}[thm]{Question}



\theoremstyle{definition}
\newtheorem{defin}[thm]{Definition}
\newtheorem{rem}[thm]{Remark}
\newtheorem{exa}[thm]{Example}



\numberwithin{equation}{section}


\frenchspacing

\textwidth=13.5cm
\textheight=23cm
\parindent=16pt
\oddsidemargin=-0.5cm
\evensidemargin=-0.5cm
\topmargin=-0.5cm



\newcommand{\fp}{\mathfrak{p}}
\newcommand{\G}{\Gamma}

\newcommand{\Hom}{\text{Hom}}

\newcommand{\af}{\text{af}}

\newcommand{\Sp}{\text{Spec} \,}
\newcommand{\cok}{\text{coker}}


\begin{document}


\baselineskip=17pt


\title{Between the genus and the $\G$-genus of an integral quadratic $\G$-form}

\author{Rony A. Bitan\\
Department of Mathematics, Bar-Ilan University\\ 
Ramat-Gan 5290002, ISRAEL\\
E-mail: rony.bitan@gmail.com}

\date{}

\maketitle

\newcommand{\ntt}{\normalfont\ttfamily}
\newcommand{\cn}[1]{{\protect\ntt\bslash#1}}
\newcommand{\pkg}[1]{{\protect\ntt#1}}
\newcommand{\fn}[1]{{\protect\ntt#1}}
\newcommand{\env}[1]{{\protect\ntt#1}}
\hfuzz1pc 




%
%
%
%
%
%
%


\newcommand{\surj}{\twoheadrightarrow}

\newcommand{\CE}{\mathcal{E}}
\newcommand{\CG}{\mathcal{G}}

\newcommand{\CO}{\mathcal{O}}
\newcommand{\OV}{\un{\textbf{O}}_V}
\newcommand{\SOV}{\un{\textbf{SO}}_V}

\newcommand{\ClS}{\text{Cl}_S}
\newcommand{\Cl}{\text{Cl}}

\newcommand{\CS}{\CO_S}
\newcommand{\Oiy}{\mathcal{O}_{\{\iy\}}}
\newcommand{\B}{\mathcal{B}}
\renewcommand{\k}{\varkappa}
\newcommand{\CR}{\mathcal{R}}
\newcommand{\X}{\mathcal{X}}
\newcommand{\fX}{\mathfrak{X}}
\newcommand{\wt}{\widetilde}
\newcommand{\wh}{\widehat}
\newcommand{\mk}{\medskip}
\renewcommand{\sectionmark}[1]{}
\renewcommand{\Im}{\operatorname{Im}}
\renewcommand{\Re}{\operatorname{Re}}
\newcommand{\la}{\langle}
\newcommand{\ra}{\rangle}
\newcommand{\N}{\un{N}}
\newcommand{\sh}{\text{sh}}
\newcommand{\tor}{\text{tor}}
\newcommand{\op}{\text{op}}
\newcommand{\diag}{\text{diag}}
\newcommand{\Tr}{\text{Tr}}
\newcommand{\Br}{{\mathrm{Br}}}

\renewcommand{\th}{\theta}
\newcommand{\ve}{\varepsilon}
\newcommand{\iy}{\infty}
\newcommand{\iintl}{\iint\limits}
\newcommand{\cupl}{\operatornamewithlimits{\bigcup}\limits}
\newcommand{\suml}{\sum\limits}
\newcommand{\ord}{\operatorname{ord}}
\newcommand{\bk}{\bigskip}
\newcommand{\fc}{\frac}
\newcommand{\g}{\gamma}
\newcommand{\be}{\beta}
\newcommand{\s}{\sigma}
\renewcommand{\sc}{\text{sc}}
\renewcommand{\ss}{\text{ss}}
\newcommand{\Pic}{\text{Pic~}}

\newcommand{\F}{\mathcal{F}}
\newcommand{\C}{\textbf{C}}

\newcommand{\dl}{\delta}
\newcommand{\Dl}{\Delta}
\newcommand{\lm}{\lambda}
\newcommand{\Lm}{\Lambda}
\newcommand{\om}{\omega}
\newcommand{\Om}{\Omega}
\newcommand{\ov}{\overline}
\newcommand{\un}{\underline}
\newcommand{\vp}{\varphi}
\newcommand{\BC}{\mathbb{C}}
\newcommand{\BG}{\mathbb{G}}
\newcommand{\BF}{\mathbb{F}}
\newcommand{\BP}{\mathbb{P}}
\newcommand{\bP}{\textbf{P}}
\newcommand{\BQ}{\mathbb{Q}}
\newcommand{\BM}{\Bbb{M}}
\newcommand{\BR}{\Bbb{R}}
\newcommand{\BN}{\Bbb{N}}
\newcommand{\fG}{\frak{G}}
\newcommand{\fC}{\frak{C}}
\newcommand{\CH}{\mathcal{H}}
\newcommand{\T}{\mathcal{T}}
\newcommand{\lT}{\T^{\text{lft}}}
\newcommand{\fT}{\T^{\text{ft}}}
\newcommand{\rlT}{\overline{\T}^{\text{lft}}}
\newcommand{\rfT}{\overline{\T}^{\text{ft}}}
\newcommand{\rcT}{\overline{\T}^{\text{0}}}
\newcommand{\TP}{\mathcal{P}}
\newcommand{\rX}{\overline{X}}
\newcommand{\rcX}{\overline{X}^0}
\newcommand{\js}{j_*^{\text{st}}}
\newcommand{\tmod}{\text{mod}}
\newcommand{\id}{\operatorname{id}}
\newcommand{\ep}{\epsilon}
\newcommand{\tp}{\tilde\partial}
\newcommand{\doe}{\overset{\text{def}}{=}}
\newcommand{\loc} {\operatorname{loc}}
\newcommand{\de}{\partial}
\newcommand{\z}{\zeta}
\renewcommand{\a}{\alpha}
\renewcommand{\b}{\beta}

\newcommand{\et}{\text{\'et}}
\newcommand{\fl}{\text{fl}}
\newcommand{\p}{\varphi}

\newcommand{\Z}{\mathbb{Z}}
\newcommand{\Q}{\mathbb{Q}}
\newcommand{\A}{\mathbb{A}}




%
%
%

\renewcommand{\thefootnote}{}

\footnote{2010 \emph{Mathematics Subject Classification}: Primary 11Gxx; Secondary 11Rxx.}

\footnote{\emph{Key words and phrases}: Hasse principle, integral quadratic forms, \'etale and flat cohomology, genus.}

\renewcommand{\thefootnote}{\arabic{footnote}}
\setcounter{footnote}{0}

\begin{abstract}
Let $\G$ be a finite group and $(V,q)$ be a regular quadratic $\G$-form defined over an integral domain $\CS$  
of a global function field (of odd characteristic). 
We use flat cohomology to classify the quadratic $\G$-forms defined over $\CS$ 
that are locally $\G$-isomorphic for the flat topology to $(V,q)$  
and compare between the genus $c(q)$ and the $\G$-genus $c_\G(q)$ of $q$. 
We show that $c_\G(q)$ should not inject in $c(q)$. 
The suggested obstruction arises from the failure of the Witt cancellation theorem for $\CS$.  
\end{abstract}

\section{Introduction}
Let $C$ be a geometrically connected and smooth projective curve defined over the finite field $\BF_q$ ($q$ is odd).  
Let $K=\mathbb{F}_{q}(C)$ be its function field and let $\Omega$ denote the set of all closed points of $C$. 
For any point $\fp \in \Om$ let $v_\fp$ be the induced discrete valuation on $K$,    
$\hat{\CO}_\fp$ the complete discrete valuation ring with respect to $v_\fp$   
and $\hat{K}_\fp$ its fraction field.  
Any \emph{Hasse set} of $K$ namely, a non-empty finite set $S \subset \Om$,  
gives rise to an integral domain of $K$ called a \emph{Hasse domain}: 
$$ \CS := \{x \in K: v_\fp(x) \geq 0 \ \forall \fp \notin S\}. $$ 
Any $\CS$-scheme is underlined, being omitted in the notation of its generic fiber.  

\bk

Let $\G$ be a finite group, faithfully represented by $\rho:\G \hookrightarrow \textbf{GL}(V)$ 
where $V$ is a projective $\CS$-module of rank $n \geq 1$.  
We briefly write $\g$ instead of $\rho(\g)$ when no confusion may occur 
and assume $|\G|$ is prime to $\text{char}(K)$. 
Then $\G$ acts on $\textbf{GL}(V)$ on the left by conjugation: 
$$ \forall \g \in \G, A \in \textbf{GL}(V): \ \ ^\g A = \g A \g^{-1}. $$

Let $V$ be equipped with a degree two homogeneous $\CS$-form $q:V \to \CS$, 
turning $(V,q)$ into an \emph{integral} quadratic $\CS$-space,   
represented by a bilinear map $B_q: V \times V \to \CS$ such that: 
$$ B_q(u,v) = q(u+v)-q(u)-q(v). $$
We assume $(V,q)$ is $\CS$-\emph{regular}, i.e., that 
the induced homomorphism $V \to V^\vee := \Hom(V,\CS)$ is an isomorphism.   
We say it is \emph{rationally isotropic} (or just \emph{isotropic)}, 
if there exists a non-zero $v \in V$ for which $q(v)=0$.  
It is considered a \emph{$\G$-form} if it satisfies $q \circ \g = q$ for all $\g \in \G$. 
Two integral forms $(V,q)$ and $(V',q')$ are said to be $R$-\emph{isomorphic} where $R$ is an extension of $\CS$ 
if there exists an $R$-isomorphism $A: V' \cong V$ such that $q \circ A = q'$. $A$ is called an $R$-\emph{isometry}. 
Two $\G$-forms are said to be $\G$-\emph{isomorphic} over $R$ 
if there exists an $R$-isometry between them which is $\G$-equivariant.  

\bk

The classification of quadratic forms (without a group action) defined over an integral domain of a function field 
was initially studied by L.~Gerstein (\cite{Ger}) and J.\ S.\ Hsia (\cite{Hsia}) in the late seventies. 
Later on, J.\ Morales proved in \cite{Mor} that there are only finitely many $\G$-isomorphism classes of $\G$-forms defined over $\Z$ of a given discriminant. 
He also showed that the classical Hasse-Minkowski theorem, 
stating that two forms defined over a global field $K$ are $K$-isomorphic  
if and only if they are $\hat{K}_v$-isomorphic at any place $v$, 
does not hold for $\G$-forms with $\G$-isomorphisms when $K=\BQ$. 
Recently, E.\ Bayer-Fluckiger, N.\ Bhaskhar and R.\ Parimala showed in \cite{BBP}  
that this principle does hold, however, for $\G$-forms when $K$ is a global function field. 

\bk

The failure of this \emph{local-global principle} in the case of integral forms (even without a group action) is measured by the \emph{genus} of such a form. 
This term and its generalization to integral $\G$-forms are defined as follows:  

\begin{defin} 
The \emph{genus} $c(q)$ of an $\CS$-form $q$ is the set of isomorphism classes of $\CS$-forms   
that are $K$ and $\hat{\CO}_\fp$-isomorphic to $q$ for any prime $\fp \notin S$.  \\
The $\G$-\emph{genus} $c_\G(q)$ of a $\G$-form $q$ defined over $\CS$ is the set of $\G$-isomorphism classes of $\G$-forms defined over $\CS$   
that are $K$ and $\hat{\CO}_\fp$-isomorphic to $q$ for any prime $\fp \notin S$, and these isomorphisms are $\G$-ones. \\
We denote by $c^+(q)$ and $c_\G^+(q)$ the genus and the $\G$-genus of $q$ respectively, 
with respect to proper (i.e., of $\det=1$) isomorphisms only. 
\end{defin}  

This paper was motivated by the following question, as was posed to me by B. Kunyavski\u\i : 

\begin{question} \label{question}
Suppose two integral $\G$-forms share the same $\G$-genus 
and they are $\CS$-isomorphic (the $\G$-action is forgotten). 
Would they necessarily be also $\G$-isomorphic ? 
\end{question}

Any integral $\G$-form representing a class in $c_\G(q)$ clearly represents a class in $c(q)$ as well. 
So the map $\psi:c_\G(q) \to c(q)$ is well-defined,        
and we may rephrase Question \ref{question} as follows: \\
Is $\psi$ always an injection ? 
Because if the answer is no, and only then, this would mean that there exist two integral $\G$-forms  
representing two distinct classes in $c_\G(q)$, though being $\CS$-isomorphic.  

\bk

After showing in Section \ref{Classification} that [$H^1_\fl(\CS,\SOV^\G)$] $H^1_\fl(\CS,\OV^\G)$  
[properly] classifies the integral $\G$-forms that are locally $\G$-isomorphic to $(V,q)$, 
where [$\SOV^\G$] $\OV^\G$ stands for the [special] orthogonal group of $(V,q)$,  
we compare in Section \ref{genus} between the genus and the $\G$-genus of $(V,q)$, 
and give a necessary and sufficient condition for the positive answer to Question \ref{question}.  
In order to provide a counter-example, we refer in Section \ref{Obstruction} more concretely 
to the case in which  $(\OV^\G)^0$ 
is the special orthogonal group of another isotropic form $(V',q')$. 
Based on a result established in \cite{Bit1} and \cite{Bit2}, stating that if $q$ is isotropic of rank $\geq 3$, 
then $c(q) \cong \Pic(\CS)/2$, we show that if $q'$ is isotropic of rank $2$ 
then it may possess twisted forms which are only \emph{stably isomorphic}, 
namely, that become isomorphic over (any) regular non-trivial extension of $V'$. 
So our suggested obstruction to Question \ref{question} 
arises from the failure of the Witt cancellation theorem over $\CS$.     

\bk

{\bf Acknowledgements:} 
I would like to thank my Post Doc. host in Camille Jordan Institute in the University Lyon1, P.~Gille, for his useful advice and support, 
and B.~Kunyavski\u\i\ for valuable discussions concerning the topics of the present article.

\bk

\section{The classification via flat cohomology} \label{Classification}
The following general framework that appears in \cite[\S 2.2.4]{CF} allows us to derive some known facts  
about the classification of integral forms via flat cohomology, to integral $\G$-forms.   

\begin{prop} \label{flat classification}
Let $R$ be a scheme and $X_0$ be an $R$-form, namely, 
an object of a fibered category of schemes defined over $R$. 
Let $\textbf{Aut}_{X_0}$ be its $R$-group of automorphisms. 
Let $\mathfrak{Forms}(X_0)$ be the category of $R$-forms that are locally isomorphic for some topology to $X_0$   
and let $\mathfrak{Tors}(\text{Aut}_{X_0})$ be the category of $\text{Aut}_{X_0}$-torsors in that topology.    
The functor 
$$ \p:\mathfrak{Forms}(X_0) \to \mathfrak{Tors}(\textbf{Aut}_{X_0}): \ X \mapsto \textbf{Iso}_{X_0,X} $$ 
is an equivalence of fibered categories.     
\end{prop}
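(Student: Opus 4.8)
The plan is to prove that $\p$ is an equivalence by exhibiting an explicit quasi-inverse built from faithfully flat descent, following the standard dictionary between twisted forms and torsors.

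First I would verify that $\p$ is well defined and lands in $\mathfrak{Tors}(\textbf{Aut}_{X_0})$. For an object $X$ of $\mathfrak{Forms}(X_0)$, the presheaf $\textbf{Iso}_{X_0,X}$ sends a flat $R$-scheme $R' \to R$ to the set of isomorphisms $X_{0,R'} \xrightarrow{\sim} X_{R'}$ in the fibered category; it is a sheaf for the flat topology because isomorphisms between objects of a fibered category glue, so it is a subsheaf of an appropriate Hom-sheaf. The group $\textbf{Aut}_{X_0}$ acts on $\textbf{Iso}_{X_0,X}$ by precomposition, $f \mapsto f \circ g$, and this action is simply transitive on every non-empty fibre (fix one $f_0$; then $f = f_0 \circ (f_0^{-1} \circ f)$ with $f_0^{-1} \circ f \in \textbf{Aut}_{X_0}$). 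Since $X$ is by hypothesis locally isomorphic to $X_0$ for the flat topology, $\textbf{Iso}_{X_0,X}$ admits sections locally, hence is a torsor. Functoriality in $X$ and compatibility with the cartesian (base-change) structure are immediate from the functoriality of $\textbf{Iso}$, so $\p$ is a morphism of fibered categories.

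Next I would construct the quasi-inverse. Given an $\textbf{Aut}_{X_0}$-torsor $T$, form the twist ${}^T\!X_0 := T \wedge^{\textbf{Aut}_{X_0}} X_0$, i.e.\ descend $X_0$ along a trivializing flat cover of $T$ using the cocycle carried by $T$. The essential input is that the descent datum on $X_0$ furnished by $T$ is \emph{effective} in the flat topology, so that ${}^T\!X_0$ is again an object of the fibered category over $R$; by construction it becomes isomorphic to $X_0$ over the cover, hence lies in $\mathfrak{Forms}(X_0)$. One then checks the two canonical natural isomorphisms $\textbf{Iso}_{X_0,\,{}^T\!X_0} \cong T$ and ${}^{\textbf{Iso}_{X_0,X}}X_0 \cong X$, both of which follow formally from the definitions of twisting and of the contracted product, and which exhibit $T \mapsto {}^T\!X_0$ as a quasi-inverse to $\p$.

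The main obstacle is precisely this effectivity of flat descent for the fibered category at hand: for general schemes, descent data along an fppf cover need not be effective, so a quasi-inverse cannot be defined for an arbitrary fibered category of schemes. In the situation of interest this is not an issue, because the forms are quadratic spaces $(V,q)$ with $V$ a finite locally free $\CS$-module: finite locally free modules and their morphisms satisfy fpqc (hence flat) descent, and the quadratic form $q$, being a global section of $\text{Sym}^2 V^\vee$, descends along with $V$. Thus effectivity holds in our setting. The equivariant refinement for $\G$-forms is then obtained by running the same descent with $\textbf{Aut}_{X_0}$ replaced by its $\G$-fixed subgroup, the descent remaining effective since the $\G$-action descends together with $(V,q)$.
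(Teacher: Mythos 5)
The paper offers no proof of this proposition at all: it is quoted as a ``general framework'' from Calm\`es--Fasel \cite[\S 2.2.4]{CF}, so there is nothing internal to compare your argument against. Your proof is the standard (and correct) descent-theoretic one: $\p$ lands in torsors because $\textbf{Iso}_{X_0,X}$ is a sheaf on which $\textbf{Aut}_{X_0}$ acts simply transitively and which is locally nonempty by hypothesis, and the quasi-inverse is the contracted product $T \mapsto T \wedge^{\textbf{Aut}_{X_0}} X_0$, with the two formal natural isomorphisms closing the loop. Your one substantive observation --- that the quasi-inverse only exists when descent data are effective, i.e.\ when the fibered category is a stack for the topology in question --- is well taken and is in fact a hypothesis the proposition as stated suppresses; an arbitrary ``fibered category of schemes'' does not satisfy fppf descent. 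You correctly discharge this in the case the paper actually uses (finite locally free modules equipped with a quadratic form, and their $\G$-equivariant refinements), where fpqc descent of quasi-coherent modules applies. In short: the proposal is a complete and correct proof of a statement the paper merely cites, and it adds a precision (the stack condition) that the paper's formulation glosses over.
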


We first implement this Proposition on split torsion $R$-groups.   
For a non-negative integer $m$ we consider the $R$-group $\un{\mu}_m := \Sp R[t]/(t^m-1)$. 
The pointed-set $H^1_\fl(R,\un{\mu}_m)$ classifies $\un{\mu}_m$-torsors, 
namely, $R$-groups that are locally isomorphic to $\un{\mu}_m$ for the flat topology.   
We briefly introduce another description of these elements, 
as can be found for example in \cite[\S 5.1]{AG}.   

\bk

An $m$-degree $R$-\emph{Kummer pair} is a couple $\Lm = (L,h)$ 
consisting of a rank $1$ projective $R$-module $L$  
and an isomorphism $h:R \cong L^{\otimes m}$.  
It gives rise to a $\un{\mu}_m$-torsor $E_\Lm$  
assigning to any extension $R'/R$ the group: 
$$ E_\Lm(R') = \{\p \in L^\vee \otimes R' \ : \ \p^{\otimes m} = h \} $$
where $L^\vee := \Hom(R,L)$. 

\bk

In particular, for $m=2$, we set $X_0$ to be the quadratic $R$-algebra $R \oplus R$ 
with the standard involution $(r_1,r_2) \mapsto (r_1,-r_2)$, thus $\textbf{Aut}_{X_0} = \un{\mu}_2$.  
Let $L$ be a fractional ideal of order $2$ in $\Pic(R)$.    
Then any $2$-degree Kummer pair $\Lm = (L,h)$ 
gives rise to an $R$-algebra $X= R \oplus L$ 
with multiplication defined by $(0,l_1) \cdot (0,l_2) = (h^{-1}(l_1 \otimes l_2),0)$, viewed as an $R$-form, 
whence according to Proposition \ref{flat classification}, $\Lm$ corresponds to the $\un{\mu}_2$-torsor 
$$ E_\Lm = \textbf{Iso}(R \oplus R, R \oplus L), $$ 
in which the isomorphism induced by $h$ is $\p:(r_1,r_2) \mapsto (r_1,l)$ where $l$ is such that $l \otimes l = h(r_2)$,   
i.e., $\p \otimes \p = h$.

\bk

Let $\OV$ be the \emph{orthogonal group} of $(V,q)$, namely, the $\CS$-group    
assigning to any $\CS$-algebra $R$ the group of self isometries of $q$ defined over $R$: 
$$ \OV(R) = \{ A \in \textbf{GL}(V \otimes R): \ q \circ A = q \}. $$ 
The pointed-set $H^1_\fl(\CS,\OV)$ classifies the integral quadratic forms of rank $n$ (cf. \cite[IV.5.3.1]{Knu}), 
being all locally isomorphic for the flat topology to $(V,q)$.  
We may generalize this to $\G$-forms as follows: 
Suppose $(V,q)$ is a $\G$-form. 
Then restricting $\p$ in Proposition \ref{flat classification}    
to the $\CS$-group of $\G$-automorphisms $\OV^\G$ for the flat topology, 
the corresponding forms are the integral $\G$-forms 
that are locally $\G$-isomorphic to $(V,q)$ in the flat topology. 
Modulo $\CS$-isomorphisms we get $H^1_\fl(\CS,\OV^\G)$.  

\bk

As $2 \in \CS^\times$ and $(V,q)$ is $\CS$-regular, $\OV$ is smooth 
and its connected component, the \emph{special orthogonal group} of $(V,q)$,  
is $\SOV=\ker[\OV \stackrel{\det}{\rightarrow} \un{\mu}_2]$, containing only the proper isometries  
(cf. \cite[Thm.~1.7,~Cor.~2.5]{Con}).  
The push-forward homomorphism $\det_*:H^1_\fl(\CS,\OV) \to H^1_\fl(\CS,\un{\mu}_2)$ induced by flat cohomology,   
assigns to any quadratic form $(V',q')$ (taken up to an $\CS$-isomorphism) 
the class of its \emph{discriminant module} $D(q') = D(V',q') = (\wedge^n V',\det(q'))$ (see \cite[IV,~4.6,~5.3.1]{Knu}).   
Let $\SOV^\G$ be its $\G$-invariant subgroup. 
Our assumption that $|\G| \in \CS^\times$ guarantees that $\SOV^\G$ 
remains smooth (cf. \cite[Proposition~A.8.10(2)]{CGP2}). 
Any representative $(V',q')$ of a class in $H^1_\fl(\CS,\SOV^\G)$ represents a class in $H^1_\fl(\CS,\OV^\G)$, 
though $H^1_\fl(\CS,\SOV^\G)$ should not embed in $H^1_\fl(\CS,\OV^\G)$ 
(notice that these pointed-sets do not have to be groups).  
More precisely, any class in $H^1_\fl(\CS,\SOV^\G)$ 
is represented by a triple $(V',q',\theta')$ 
where $\theta'$ is the trivialization of $D(q')$, 
namely, an isomorphism $\theta': D(q') \otimes S \cong D(q)$ where $S$ is some $2$-degree flat 
extension of $\CS$. 
Any proper $\G$-isometry $A: V' \cong V: q \circ A = q'$  
induces an isomorphism $D(A) : D(q') \otimes S \cong D(q)$. 
The question is whether these additional data $D(q')$ and $D(A)$ 
(when $q'$ is a $\G$-form) are also $\G$-equivariant. 

\bk

In order to answer this question we may consider as mentioned before the $\CS$-module $D(q')$, 
being of rank $1$ and isomorphic to $\CS$ over some at most $2$-degree \'etale extension,   
as a $2$-degree Kummer pair $(L=D(q'),h)$,   
giving rise to the $\un{\mu}_2$-torsor $E_{q'}$ for which  
$E_{q'}(\CS) = \{ \p \in L^\vee: \p \otimes \p = h \}$. 
Since ${q'}$ is a $\G$-form, $\G$ admits the following commutative diagram: 
\begin{equation*}
\xymatrix{
\G \ \ar@{^{(}->}[r]^{\rho} \ar[rd] & \un{\textbf{O}}_{V'}(\CS) \ar[d]^{\det}  \\
                                    & \un{\mu}_2(\CS) 
}
\end{equation*} 
from which we see that $\G$ acts on $E_{q'}(\CS)$ through its determinant in $\un{\mu}_2(\CS)$. 
But $\un{\mu}_2(\CS)$ is the automorphism group of $\CS \oplus \CS$ 
with respect to its standard involution $\tau=(\text{id},-\text{id})$  
and $E_{q'}(\CS)$ is stable (not fixed point-wise) under $\tau$, as $-l \otimes -l = l \otimes l$.  
Furthermore, the correspondence $A \leadsto D(A)$ is functorial,  
thus referring to any $\g \in \G$ as to an isometry one has 
$$ \g A \g^{-1} = A \ \Longleftrightarrow \ D(\g) D(A) D(\g)^{-1} = D(A), $$
i.e. $D(A)$ is $\G$-invariant as well. 
So Proposition \ref{flat classification} is also applied to 
the proper classification. 

\begin{cor} \label{G genus}
Given an integral $\G$-form base-point $(V,q)$, 
the pointed set $H^1_\fl(\CS,\OV^\G)$ $\mathrm{[}H^1_\fl(\CS,\SOV^\G)\mathrm{]}$  
$\mathrm{[}$properly$\mathrm{]}$ classifies the integral $\G$-forms that are locally 
$\G$-isomorphic to $(V,q)$ for the flat topology. 
\end{cor}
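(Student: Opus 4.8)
The plan is to realize Corollary \ref{G genus} as a direct application of Proposition \ref{flat classification} with a carefully chosen fibered category and automorphism group, combined with the discriminant-compatibility computation carried out in the preceding discussion. First I would fix the fibered category $\mathfrak{Forms}((V,q))$ to be the category of integral $\G$-forms over $\CS$ that are locally $\G$-isomorphic to $(V,q)$ for the flat topology, with morphisms the $\G$-equivariant isometries. The key observation is that the $\CS$-group of automorphisms of the object $(V,q)$ \emph{in this $\G$-equivariant category} is precisely $\OV^\G$: an automorphism must be an isometry of $q$ (hence lie in $\OV$) and commute with every $\rho(\g)$ (hence be $\G$-invariant). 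With this identification, Proposition \ref{flat classification} yields an equivalence $\mathfrak{Forms}((V,q)) \simeq \mathfrak{Tors}(\OV^\G)$, and passing to isomorphism classes gives the bijection between the set of $\CS$-isomorphism classes of such $\G$-forms and $H^1_\fl(\CS,\OV^\G)$. This settles the first, unbracketed assertion.

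\smallskip

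\emph{The proper classification.} For the bracketed statement involving $\SOV^\G$, I would argue that a class in $H^1_\fl(\CS,\SOV^\G)$ carries the same data as a $\G$-form $(V',q')$ together with a trivialization of its discriminant module, i.e. a triple $(V',q',\theta')$ as described above. The strategy is to show that the fibered category of such triples, with morphisms the proper $\G$-isometries compatible with the trivializations, has automorphism group of the base triple equal to $\SOV^\G$, so that Proposition \ref{flat classification} again applies verbatim. The essential point — already prepared in the paragraph preceding the corollary — is that the discriminant data $D(q')$ and the induced $D(A)$ are automatically $\G$-equivariant. I would invoke the commutative triangle showing $\G$ acts on $E_{q'}(\CS)$ only through $\det \circ \rho$ into $\un{\mu}_2(\CS)$, together with the functoriality equivalence $\g A \g^{-1} = A \Longleftrightarrow D(\g) D(A) D(\g)^{-1} = D(A)$, to conclude that no extra $\G$-compatibility constraint is imposed beyond properness. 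Hence the proper $\G$-classification is governed by $\SOV^\G$-torsors, giving $H^1_\fl(\CS,\SOV^\G)$.

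\smallskip

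\emph{Main obstacle.} The part that requires the most care is verifying that restricting the equivalence $\p$ of Proposition \ref{flat classification} from $\OV$ to the $\G$-invariant subgroup $\OV^\G$ genuinely produces \emph{all} locally $\G$-isomorphic $\G$-forms and not merely those twisted by $\OV^\G$-cocycles in some a priori smaller class. Concretely, I must check that an $\OV^\G$-torsor $\textbf{Iso}^\G_{(V,q),(V',q')}$ of $\G$-equivariant isometries is nonempty locally in the flat topology exactly when $(V',q')$ is locally $\G$-isomorphic to $(V,q)$, which is the content of being a form in $\mathfrak{Forms}((V,q))$; this is where the smoothness of $\OV^\G$ (guaranteed by $|\G| \in \CS^\times$, as cited from \cite[Proposition~A.8.10(2)]{CGP2}) is used, ensuring the relevant cohomology is computed in a well-behaved topology and that the torsor condition is equivalent to local triviality. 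Once smoothness and the automorphism-group identifications are in place, the corollary follows formally, so the proof is essentially a matter of assembling Proposition \ref{flat classification} with the $\G$-equivariant discriminant computation already displayed.
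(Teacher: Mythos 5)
Your proposal is correct and follows essentially the same route as the paper: the unbracketed claim is Proposition \ref{flat classification} applied with $\textbf{Aut}_{(V,q)}=\OV^\G$ in the $\G$-equivariant category, and the proper claim reduces to checking that the discriminant data $(D(q'),D(A))$ of a triple $(V',q',\theta')$ is automatically $\G$-equivariant, via the determinant triangle and the functoriality $\g A\g^{-1}=A\Longleftrightarrow D(\g)D(A)D(\g)^{-1}=D(A)$. The paper treats the corollary as a summary of exactly this preceding discussion, so no further comment is needed.
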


\bk

\section{The [proper] genus and [proper] $\G$-genus} \label{genus}
Consider the ring of $S$-integral ad\`eles $\A_S := \prod_{\fp \in S} \hat{K}_\fp \times \prod_{\fp \notin S} \hat{\CO}_\fp$, 
being a subring of the ad\`eles $\A$.  
The $S$-\emph{class set} of an affine and flat $\CS$-group $\un{G}$ is the set of double cosets:  
$$ \ClS(\un{G}) := \un{G}(\A_S) \backslash \un{G}(\A) / G(K) $$
(where for any prime $\fp$ the geometric fiber $\un{G}_\fp$ of $\un{G}$ is taken).    
According to Nisnevich (\cite[Theorem~I.3.5]{Nis}) $\un{G}$ admits the following exact sequence of pointed sets  
\begin{equation} \label{Nis sequence}
1 \to \ClS(\un{G}) \to H^1_\fl(\CS,\un{G}) \to H^1(K,G) \times \prod_{\fp \notin S} H^1_\fl(\hat{\CO}_\fp,\un{G}_\fp)     
\end{equation} 
in which the left exactness reflects the fact that $\ClS(\un{G})$ is the \emph{genus} of $\un{G}$, 
namely the set of (classes of) $\un{G}$-torsors that are generically and locally at $\fp \notin S$ isomorphic to $\un{G}$.  
If $\un{G}$ admits the property
\begin{equation} \label{locally embedded}
\forall \fp \notin S : \ \ H^1_\fl(\hat{\CO}_\fp,\un{G}_\fp) \hookrightarrow H^1_\fl(\hat{K}_\fp,G_\fp),  
\end{equation}  
then due to Corollary 3.6 in \cite{Nis} the Nisnevich's sequence \eqref{Nis sequence} simplifies to  
\begin{equation} \label{Nis simple}
1 \to \ClS(\un{G}) \to H^1_\fl(\CS,\un{G}) \to H^1(K,G),       
\end{equation} 
which indicates that any two $\un{G}$-torsors belong to the same genus if and only if they are $K$-isomorphic.

\begin{rem} \label{finite etale extension is embedded in generic fiber} 
Since $\Sp \CS$ is normal, i.e., is integrally closed locally everywhere (due to the smoothness of $C$), 
any finite \'etale covering of $\CS$ arises by its normalization 
in some separable unramified extension of $K$ (see \cite[Theorem~6.13]{Len}). 
Consequently, if $\un{G}$ is a finite $\CS$-group, 
then $H^1_\et(\CS,\un{G})$ is embedded in $H^1(K,\un{G})$. 
\end{rem}

\begin{lem} \label{genus of subgroup}
Let $\p:\un{G} \to \un{G}'$ be a monomorphism of smooth affine $\CS$-groups   
and let $\wt{\un{Q}}$ be the sheafification of $\un{Q}:=\cok(\p)$. Then  
\begin{itemize}{}
\itemindent=0em
\item The map $\un{G}'(\CS) \to \wt{\un{Q}}(\CS)$ is surjective iff   
$ \ \ker[\ClS(\un{G}) \stackrel{\psi}{\rightarrow} \ClS(\un{G}')]=~1$.  
\item If, moreover, $\un{G}$ is locally of finite presentation, $\un{G}$ and $\un{G}'$ admit property \eqref{locally embedded}, 
$\un{Q}$ is a finite $\CS$-group and $G'(K) \to Q(K)$ is surjective, then $\psi$ is surjective. 
\end{itemize} 
\end{lem}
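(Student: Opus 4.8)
The plan is to work with the long exact sequence in flat cohomology arising from the short exact sequence of sheaves

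\begin{equation*}
1 \to \un{G} \stackrel{\p}{\to} \un{G}' \to \wt{\un{Q}} \to 1,
\end{equation*}

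which is exact for the flat topology precisely because $\wt{\un{Q}}$ is the sheafification of the na\"ive cokernel and all groups are smooth affine. Taking flat cohomology over $\CS$ gives the sequence

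\begin{equation*}
\un{G}'(\CS) \to \wt{\un{Q}}(\CS) \stackrel{\de}{\to} H^1_\fl(\CS,\un{G}) \stackrel{\psi}{\to} H^1_\fl(\CS,\un{G}').
\end{equation*}

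By exactness at $H^1_\fl(\CS,\un{G})$, the kernel of $\psi$ is exactly the image of the connecting map $\de$, so $\ker\psi$ is trivial if and only if $\de$ is trivial, which by exactness at $\wt{\un{Q}}(\CS)$ happens if and only if $\un{G}'(\CS) \to \wt{\un{Q}}(\CS)$ is surjective. This sequence restricts compatibly to the class sets sitting inside, via the Nisnevich left-exact sequence \eqref{Nis sequence}, so the same argument run on the genus piece $\ClS(\un{G}) \hookrightarrow H^1_\fl(\CS,\un{G})$ identifies $\ker[\ClS(\un{G}) \stackrel{\psi}{\to} \ClS(\un{G}')]$ with the cokernel of $\un{G}'(\CS) \to \wt{\un{Q}}(\CS)$. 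That settles the first bullet; the main point I would need to verify carefully is that the connecting map indeed lands in $\ClS(\un{G})$ and not merely in the ambient $H^1_\fl(\CS,\un{G})$, i.e.\ that classes coming from $\de$ are generically and locally-at-$\fp$ trivial.

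For the second bullet, the goal is to upgrade injectivity-of-kernel information into full surjectivity of $\psi:\ClS(\un{G}) \to \ClS(\un{G}')$. I would first invoke property \eqref{locally embedded} for both $\un{G}$ and $\un{G}'$, so that by Nisnevich's Corollary 3.6 the genus sequence simplifies to \eqref{Nis simple}: an element of $\ClS(\un{G}')$ is a $\un{G}'$-torsor whose generic fiber over $K$ is trivial. Thus the task reduces to lifting: given a class in $\ClS(\un{G}') \subset H^1_\fl(\CS,\un{G}')$ that dies generically, I must produce a $\un{G}$-torsor mapping to it. The obstruction to lifting along $\psi$ lies in $H^1_\fl(\CS,\wt{\un{Q}})$ via the continuation of the long exact sequence, so surjectivity of $\psi$ onto the genus follows once I show the relevant image in $H^1_\fl(\CS,\wt{\un{Q}})$ vanishes. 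Here I would use that $\un{Q}$ is a finite $\CS$-group: by Remark \ref{finite etale extension is embedded in generic fiber}, $H^1_\et(\CS,\wt{\un{Q}})$ embeds in $H^1(K,\wt{\un{Q}})$, so a class that is generically trivial is already trivial over $\CS$.

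Concretely, I would take a class $c' \in \ClS(\un{G}')$; since it lies in the genus, \eqref{Nis simple} gives that its image in $H^1(K,G')$ is trivial, i.e.\ the generic fiber of the torsor is a trivial $G'$-torsor. The hypothesis that $G'(K) \to Q(K)$ is surjective, combined with the long exact cohomology sequence over $K$, forces the image of $c'$ under the boundary map into $H^1_\fl(\CS,\wt{\un{Q}})$ to be generically trivial. Then the finiteness of $\un{Q}$ and the embedding $H^1_\et(\CS,\wt{\un{Q}}) \hookrightarrow H^1(K,\wt{\un{Q}})$ of Remark \ref{finite etale extension is embedded in generic fiber} force this boundary class to vanish outright, whence $c'$ lifts to some $c \in H^1_\fl(\CS,\un{G})$. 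A final diagram chase with the Nisnevich sequences for $\un{G}$ and $\un{G}'$ confirms that such a lift can be chosen inside the genus $\ClS(\un{G})$, yielding surjectivity of $\psi$. The step I expect to be the genuine obstacle is the last one: ensuring the lift stays within the class set $\ClS(\un{G})$ rather than escaping into the larger $H^1_\fl(\CS,\un{G})$, which is where the local-embedding property \eqref{locally embedded} for $\un{G}$ and the compatibility of all the boundary maps with the adelic class-set decomposition must be used delicately.
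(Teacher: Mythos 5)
Your proposal follows essentially the same route as the paper: the long exact flat-cohomology sequence of the sheafified quotient for the first bullet, and for the second a diagram chase combining the simplification \eqref{Nis simple} under property \eqref{locally embedded}, the injection $H^1_\et(\CS,\un{Q})\hookrightarrow H^1(K,Q)$ from Remark \ref{finite etale extension is embedded in generic fiber}, and the hypothesis on $G'(K)\to Q(K)$. The only detail to adjust is that this last hypothesis is used to make $H^1(K,G)\to H^1(K,G')$ injective, which is what forces your lift $c$ to be generically trivial and hence, by \eqref{Nis simple} for $\un{G}$, to lie in $\ClS(\un{G})$ --- precisely the final step you flagged as delicate.
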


\begin{proof} 
As a pointed set, $\ClS(\un{G})$ is bijective to the first Nisnevich's cohomology set 
$H^1_{\text{Nis}}(\CS,\un{G})$ (cf. \cite[I.~Theorem~2.8]{Nis}), 
classifying $\un{G}$-torsors for the Nisnevich's topology. 
But Nisnevich's covers are \'etale, so $\ClS(\un{G})$ is a subset of $H^1_\et(\CS,\un{G})$. 
The monomorphism $\p$ does not have to be a closed immersion hence $\un{Q}$ may not be representable, 
and so we may not be able to apply flat cohomology on the obtained short exact sequence of $\CS$-schemes.   
Restricting, however, to the small site of flat extensions of $\CS$, 
we have $\wt{\un{G}}(R) \subseteq \wt{\un{G}'}(R)$ for any such extension $R/\CS$, 
where $\wt{\un{G}}$ and $\wt{\un{G}'}$ stand for the sheafifications of $\un{G}$ and $\un{G}'$, respectively. 
Then flat cohomology applied to the exact sequence of flat sheaves 
\begin{equation} \label{sheaves} 
1 \to \wt{\un{G}} \xrightarrow{i} \wt{\un{G}'} \to \wt{\un{Q}} \to 1
\end{equation}
yields a long exact sequence in which $H^1_\fl(\CS,\wt{*}) = H^1_\fl(\CS,*) = H^1_\et(\CS,*)$ 
for both smooth groups $* = \un{G}'$ and $\un{G}$,   
whence $\wt{\un{G}'}(\CS) = \un{G}'(\CS) \to \wt{\un{Q}}(\CS)$ is surjective if and only if  
$\ker[H^1_\et(\CS,\un{G}) \xrightarrow{\psi} H^1_\et(\CS,\un{G}')]=1$,   
being equivalent by restriction to $\ClS(\un{G})$, 
to $\ker[\ClS(\un{G}) \xrightarrow{\psi} \ClS(\un{G}')]=1$, 
since any twisted form in $H^1_\et(\CS,\un{G})$ can be $\CS$-isomorphic to $\un{G}$ only if it belongs to $\ClS(\un{G})$.

Now suppose  $\un{G}$ is locally of finite presentation and $\un{Q}$ is representable as a finite $\CS$-group. 
Then $\un{Q}$ is smooth as well (cf. \cite[VI$_\text{B}$, Proposition~9.2~xii]{SGA3}) 
and so given furthermore that $\un{G}$ and $\un{G}'$ admit property \eqref{locally embedded} and  $G'(K) \to Q(K)$ is surjective,     
\'etale cohomology applied to sequence \eqref{sheaves} over $\CS$ and over $K$  
extends the exactness of sequence \eqref{Nis simple} to the commutative diagram 
\begin{equation*}
\xymatrix{
         & \ClS(\un{G})        \ar[r]^{\psi} \ar@{^{(}->}[d]^{i} & \ClS(\un{G}')        \ar@{^{(}->}[d]^{i'}     \\
1 \ar[r] & H^1_\et(\CS,\un{G}) \ar[r]^{\psi} \ar[d]^{m}          & H^1_\et(\CS,\un{G}') \ar[d]^{m'} \ar[r]^{d}   & H^1_\et(\CS,\un{Q}) \ar@{^{(}->}[d]^{m''}  \\
1 \ar[r] & H^1(K,G)            \ar@{^{(}->}[r]^{h}               & H^1(K,G')            \ar[r]^{d'}              & H^1(K,Q) 
}
\end{equation*} 
in which $m''$ is injective as $\un{Q}$ is finite, by Remark \ref{finite etale extension is embedded in generic fiber}.  
We then get the surjectivity of $\psi$: 
\begin{align*}
x' \in \ClS(\un{G}') \ &\Rightarrow \ m''(d(i'(x'))) = d'(m'(i'(x'))=0)=0 \ \Rightarrow \ d(i'(x'))=0  \\ 
                       &\Rightarrow \ \exists y \in H^1_\et(\CS,\un{G}):\psi(y) = i'(x') \ \Rightarrow \  m'(\psi(y)) = h(m(y)) = 0 \\
											 &\Rightarrow \ m(y)=0 \ \Rightarrow \ \exists x \in \ClS(\un{G}):\psi(i(x)=y) = i'(x') \ \Rightarrow \ \psi(x) = x'. 
\end{align*}  
\end{proof} 

We return now to our integral $\G$-form $(V,q)$ for which $\ClS(\OV^\G) = c_\G(q)$ and $\ClS((\OV^\G)^0) = c_\G^+(q)$. 
Since $\OV^\G$ and $(\OV^\G)^0$ are smooth, their flat cohomology sets over $\Sp \CS$ 
coincide with the \'etale ones (cf. \cite[VIII~Corollaire~2.3]{SGA4}). 
According to Lemma \ref{genus of subgroup}, if $\OV(\CS) \to (\wt{\OV/\OV^\G})(\CS)$ is surjective 
then $\ker[c_\G(q) \xrightarrow{\psi} c(q)]=1$, 
which means that for any $[q'] \in c_\G(q)$, 
if $q'$ is not $\G$-isomorphic to $q$ then neither is it $\CS$-isomorphic to it. 
This still does not imply that $c_\G(q)$ injects into $c(q)$ since both are not necessarily groups, 
so there may be two classes of forms other than $[q]$, 
which are distinct in $c_\G(q)$, yet are $\CS$-isomorphic. 
We summarize by 

\begin{prop} \label{all genus} 
Question \ref{question} is answered in the affirmative 
if and only if $\un{\textbf{O}}_{V'}(\CS) \to (\wt{\un{\textbf{O}}_{V'}/\un{\textbf{O}}_{V'}^\G})(\CS)$ is surjective for any $[(V',q')] \in c(q)$. 
\end{prop}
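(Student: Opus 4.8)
The plan is to reduce the statement to Lemma~\ref{genus of subgroup} via the standard twisting formalism for non-abelian flat cohomology. Recall that $\psi$ is the map $\ClS(\OV^\G) \to \ClS(\OV)$ induced by the monomorphism $\OV^\G \hookrightarrow \OV$, and that an affirmative answer to Question~\ref{question} is precisely the assertion that $\psi$ is injective as a map of pointed sets, i.e.\ that every fibre of $\psi$ is a singleton (empty fibres imposing no condition). The first bullet of Lemma~\ref{genus of subgroup}, applied directly to $\OV^\G \hookrightarrow \OV$, already handles the fibre over the distinguished point $[q]$: it is trivial exactly when $\OV(\CS) \to (\wt{\OV/\OV^\G})(\CS)$ is surjective. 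The content of the present proof is to upgrade this base-point criterion to a criterion at every point of $c(q)$.

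First I would fix a class $y = [(V',q')]$ lying in the image of $\psi$ and choose a $\G$-form representative, that is, a cocycle $z$ with $[z] \in c_\G(q) = H^1_\fl(\CS,\OV^\G)$ mapping to $y \in H^1_\fl(\CS,\OV)$. Twisting the inclusion $\OV^\G \hookrightarrow \OV$ by $z$, I would identify the twist of $\OV$ with the orthogonal group $\un{\textbf{O}}_{V'}$ of the twisted form $(V',q')$, and the twist of $\OV^\G$ with $\un{\textbf{O}}_{V'}^\G$. The twisting bijections $H^1_\fl(\CS,\OV^\G) \cong H^1_\fl(\CS,\un{\textbf{O}}_{V'}^\G)$ and $H^1_\fl(\CS,\OV) \cong H^1_\fl(\CS,\un{\textbf{O}}_{V'})$ carry $[z]$ and $y$ to the respective base points, respect the genus subsets $\ClS$, and commute with the maps induced by the inclusions; hence they restrict to a bijection between the fibre $\psi^{-1}(y)$ and $\ker[\psi_{V'} : c_\G(q') \to c(q')]$, where $\psi_{V'}$ is the analogue of $\psi$ built from $\un{\textbf{O}}_{V'}^\G \hookrightarrow \un{\textbf{O}}_{V'}$.

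Next I would apply Lemma~\ref{genus of subgroup} a second time, now to the twisted monomorphism $\un{\textbf{O}}_{V'}^\G \hookrightarrow \un{\textbf{O}}_{V'}$. Both groups are again smooth affine $\CS$-groups: $\un{\textbf{O}}_{V'}$ is smooth since $(V',q')$ is $\CS$-regular and $2 \in \CS^\times$ (cf.\ \cite[Thm.~1.7,~Cor.~2.5]{Con}), and $\un{\textbf{O}}_{V'}^\G$ is smooth by the same argument that gave the smoothness of $\OV^\G$, using $|\G| \in \CS^\times$ (cf.\ \cite[Proposition~A.8.10(2)]{CGP2}). The first bullet of the Lemma then yields that $\ker[\psi_{V'}]$ is trivial if and only if $\un{\textbf{O}}_{V'}(\CS) \to (\wt{\un{\textbf{O}}_{V'}/\un{\textbf{O}}_{V'}^\G})(\CS)$ is surjective. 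Combined with the twisting bijection of the previous step, the fibre of $\psi$ over $y$ is a singleton exactly when this surjectivity holds. Since classes of $c(q)$ outside the image of $\psi$ contribute empty fibres, they impose no constraint on injectivity; the condition is therefore effectively tested on the image of $\psi$, i.e.\ on those genus classes admitting a $\G$-form representative in $c_\G(q)$, for which $\un{\textbf{O}}_{V'}^\G$ is defined. Thus $\psi$ is injective precisely when the stated surjectivity holds for every $[(V',q')] \in c(q)$, which is the desired equivalence.

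The main obstacle I anticipate is the compatibility of twisting with passage to $\G$-invariants, namely the identification of the twist of $\OV^\G$ by $z$ with $\un{\textbf{O}}_{V'}^\G$ (and of the twisted homogeneous space with $\wt{\un{\textbf{O}}_{V'}/\un{\textbf{O}}_{V'}^\G}$). This must be checked at the level of sheaves on the small flat site of $\CS$: one uses that a class in $c_\G(q)$ genuinely represents a $\G$-form, so that $\un{\textbf{O}}_{V'}$ carries the conjugation action of $\G$ through $\rho$, and that the twisting cocycle $z$, taking values in $\OV^\G$, is $\G$-equivariant and hence intertwines the two conjugation actions, so that forming invariants commutes with the twist. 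A secondary, purely formal point is the bookkeeping for pointed sets: because $\OV^\G$ is not normal in $\OV$ and $\OV/\OV^\G$ is only a homogeneous space, the passage from ``trivial kernel at each twisted base point'' to ``all fibres singletons'' rests on the twisting bijection for homogeneous spaces rather than on an exact sequence of groups, and I would record that identification carefully.
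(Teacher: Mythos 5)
Your proposal is correct and takes essentially the same route as the paper: the paper's entire argument is the paragraph preceding the Proposition, which applies the first bullet of Lemma~\ref{genus of subgroup} to the base point and then observes that, the genera being mere pointed sets, the condition must be imposed at every class of $c(q)$ --- the twisting formalism you spell out (identifying each fibre of $\psi$ with the kernel of the map induced by the twisted inclusion $\un{\textbf{O}}_{V'}^\G \hookrightarrow \un{\textbf{O}}_{V'}$) is exactly the implicit content of that observation. Your explicit handling of the compatibility of twisting with $\G$-invariants and of the classes outside the image of $\psi$ is more careful than what the paper records.
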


\begin{lem} \label{connected component reductive}
The group scheme $(\OV^\G)^0$ is reductive. 
\end{lem}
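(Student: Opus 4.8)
The plan is to show that $(\OV^\G)^0$ is reductive by exhibiting it as the centralizer of a linearly reductive group acting on a reductive group, and invoking a standard smoothness/reductivity result for such fixed-point subgroups. First I would recall that $\OV$ is reductive over $\CS$: since $2 \in \CS^\times$ and $(V,q)$ is $\CS$-regular, the orthogonal group $\OV$ is smooth and reductive with connected component $\SOV$, as already noted in the excerpt via \cite[Thm.~1.7,~Cor.~2.5]{Con}. The finite group $\G$ acts on $\OV$ by conjugation through the faithful representation $\rho:\G \hookrightarrow \textbf{GL}(V)$, and because $q \circ \g = q$ for all $\g \in \G$, this action preserves $\OV$; thus $\OV^\G$ is precisely the fixed-point subscheme of the $\G$-action on the reductive $\CS$-group $\OV$.

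The key input is the hypothesis $|\G| \in \CS^\times$, which makes $\G$ a linearly reductive (indeed \'etale of order prime to the residue characteristics) finite $\CS$-group scheme. For the fixed points of a linearly reductive group acting on a smooth affine group scheme, the fixed-point subscheme is smooth; this is exactly the content of \cite[Proposition~A.8.10(2)]{CGP2} already used in the excerpt to guarantee smoothness of $\SOV^\G$. The same proposition, together with its companion statements on reductivity of fixed points (see \cite[Proposition~A.8.12]{CGP2}), yields that the identity component of $\OV^\G$ is reductive: the unipotent radical of $(\OV^\G)^0$ over any geometric fiber would be a smooth connected normal unipotent subgroup, but since $\G$ is linearly reductive the formation of fixed points is exact and the fixed points of a reductive group under a linearly reductive action have reductive identity component. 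I would therefore pass to geometric fibers over $\Sp\CS$, where the residue characteristic is prime to $|\G|$, and verify reductivity fiberwise, which by smoothness and connectedness of the fibers of $(\OV^\G)^0$ suffices to conclude reductivity over the base.

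Concretely, the steps are: (i) note $(\OV^\G)^0$ is smooth and affine with connected fibers by \cite[Proposition~A.8.10]{CGP2}; (ii) over each geometric fiber, identify $(\OV^\G)^0$ as the identity component of the fixed points of the finite linearly reductive group $\G$ acting on the reductive group $\OV$; (iii) apply the fiberwise reductivity of such fixed-point groups in the linearly reductive case to conclude the geometric unipotent radical is trivial. The main obstacle I anticipate is the reductivity step in positive characteristic: one must be careful that $\G$ acts through a \emph{linearly reductive} group, which is precisely where $|\G| \in \CS^\times$ is essential, since for $p \mid |\G|$ the fixed points of a reductive group can fail to be reductive. Ensuring that the cited results of \cite{CGP2} apply uniformly over all fibers (rather than merely pointwise) — i.e. that reductivity is an open and closed condition compatible with the smoothness already established — is the part requiring the most care, though it follows from the standard fibral criteria for reductive group schemes once smoothness and fibral connectedness are in hand.
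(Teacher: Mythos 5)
Your proposal is correct and follows essentially the same route as the paper: smoothness of the fixed-point subgroup via \cite[Proposition~A.8.10]{CGP2}, fiberwise reductivity of the identity component of the fixed points via \cite[Proposition~A.8.12]{CGP2}, and the identification of that identity component with $(\OV^\G)^0$. The only cosmetic difference is that the paper runs the argument through the connected reductive group $\SOV$ and then observes $(\SOV^\G)^0=(\OV^\G)^0$, while you work directly with $\OV$; this changes nothing of substance.
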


\begin{proof}
The group scheme $\SOV$ is reductive as it is smooth, affine and all its fibers are reductive. 
As mentioned before, its affine subgroup $\SOV^\G$ is smooth as well, 
so we may refer to its neutral component $(\SOV^\G)^0$ as defined in \cite[VIB, Th\'eor\`eme~3.10]{SGA3}.  
The reduction $(\ov{\textbf{SO}}_V)_\fp$ defined over the residue field $k_\fp$ at any prime $\fp$ is reductive, 
hence according to \cite[Proposition~A.8.12]{CGP2} its subgroup $((\ov{\textbf{SO}}_V)_\fp^\G)^0 = (\ov{\textbf{SO}}_V^\G)^0_\fp$  
remains reductive, so that $(\SOV^\G)^0$ is reductive. 
The latter being a smooth, open and connected subgroup of $\OV^\G$ coincides with $(\OV^\G)^0$, thus it is reductive.
\end{proof}

\begin{rem} \label{admit property} 
The group $(\OV^\G)^0$ admits property \eqref{locally embedded} 
by Lang's Theorem (recall that all residue fields are finite).   
Being reductive over $\Sp \CS$ (Lemma~\ref{connected component reductive}),   
this property holds for $\OV^\G$ as well if $\OV^\G/(\OV^\G)^0$ is representable as a finite $\CS$-group  
(see the proof of Proposition~3.14 in \cite{CGP1}). 
\end{rem}

Following sequence \eqref{Nis simple} and Remark \ref{admit property} we then get: 

\begin{cor} \label{genus as kernel} 
$c^+_\G(q) \cong \ker[H^1_\et(\CS,(\OV^\G)^0) \to H^1(K,(\textbf{O}_V^\G)^0)]$. \\
If $\OV^\G/(\OV^\G)^0$ is a finite $\CS$-group then $c_\G(q) \cong \ker[H^1_\et(\CS,\OV^\G) \to H^1(K,\textbf{O}_V^\G)]$.  
\end{cor}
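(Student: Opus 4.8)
The plan is to read off both isomorphisms directly from the simplified Nisnevich sequence \eqref{Nis simple}, once we have verified that the relevant groups satisfy the hypotheses under which \eqref{Nis sequence} collapses to \eqref{Nis simple}. For the first assertion, I would apply sequence \eqref{Nis simple} to the reductive group $\un{G} = (\OV^\G)^0$. By Lemma~\ref{connected component reductive} this group is reductive, and by Remark~\ref{admit property} it admits property \eqref{locally embedded} via Lang's Theorem, since all residue fields $k_\fp$ are finite. Hence the hypothesis \eqref{locally embedded} needed for the passage from \eqref{Nis sequence} to \eqref{Nis simple} is in force, and \eqref{Nis simple} gives the left-exact sequence
\begin{equation*}
1 \to \ClS((\OV^\G)^0) \to H^1_\et(\CS,(\OV^\G)^0) \to H^1(K,(\textbf{O}_V^\G)^0).
\end{equation*}
Here I have used that $H^1_\fl = H^1_\et$ for the smooth group $(\OV^\G)^0$. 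Since left-exactness of this sequence of pointed sets says precisely that $\ClS((\OV^\G)^0)$ is the kernel of the displayed map, and since $\ClS((\OV^\G)^0) = c_\G^+(q)$ as recorded immediately before Proposition~\ref{all genus}, the first isomorphism follows.

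For the second assertion I would repeat the argument with $\un{G} = \OV^\G$ in place of $(\OV^\G)^0$. The only extra input is property \eqref{locally embedded} for $\OV^\G$ itself; this is exactly the content of the second sentence of Remark~\ref{admit property}, which asserts that, $\un{G}^0$ being reductive, property \eqref{locally embedded} propagates to $\OV^\G$ precisely under the hypothesis that $\OV^\G/(\OV^\G)^0$ is representable as a finite $\CS$-group. That is the hypothesis placed on the second statement of the corollary, so \eqref{locally embedded} holds and \eqref{Nis simple} again applies, now to $\OV^\G$, giving
\begin{equation*}
1 \to \ClS(\OV^\G) \to H^1_\et(\CS,\OV^\G) \to H^1(K,\textbf{O}_V^\G),
\end{equation*}
whence $c_\G(q) = \ClS(\OV^\G) \cong \ker[H^1_\et(\CS,\OV^\G) \to H^1(K,\textbf{O}_V^\G)]$.

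The step I expect to require the most care is the verification of property \eqref{locally embedded} for the possibly disconnected group $\OV^\G$, which is where the finiteness assumption on the component group $\OV^\G/(\OV^\G)^0$ is genuinely used; this is handled by invoking the argument in the proof of Proposition~3.14 of \cite{CGP1} as cited in Remark~\ref{admit property}, together with the finiteness of the residue fields. I would also take care to state that throughout we are identifying flat and \'etale cohomology for the smooth groups in question, as justified by \cite[VIII~Corollaire~2.3]{SGA4}, so that the cohomology sets appearing in \eqref{Nis simple} coincide with the \'etale ones displayed in the corollary. The remainder is the formal translation of the left-exactness of \eqref{Nis simple} into the language of kernels of pointed sets, which is routine.
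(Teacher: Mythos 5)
Your proposal is correct and follows exactly the route the paper intends: the corollary is stated immediately after the sentence ``Following sequence \eqref{Nis simple} and Remark \ref{admit property} we then get,'' so the paper's own (implicit) proof is precisely your application of the simplified Nisnevich sequence to $(\OV^\G)^0$ and to $\OV^\G$, using Lemma \ref{connected component reductive} and Remark \ref{admit property} to secure property \eqref{locally embedded} and the identifications $\ClS((\OV^\G)^0)=c^+_\G(q)$, $\ClS(\OV^\G)=c_\G(q)$. No discrepancies to report.
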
  

\begin{cor} \label{genus and proper genus}
If $(V,q)$ of rank $\geq 3$ is regular and isotropic then $c^+(q)=c(q)$. 
\end{cor}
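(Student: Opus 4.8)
The plan is to read both invariants as class sets, $c(q)=\ClS(\OV)$ and $c^+(q)=\ClS(\SOV)$, and to prove that the natural map $\psi\colon\ClS(\SOV)\to\ClS(\OV)$ induced by the inclusion $\SOV\hookrightarrow\OV$ is a bijection. Since $2\in\CS^\times$, this inclusion is the kernel of $\det\colon\OV\to\un{\mu}_2$, so its cokernel is the finite $\CS$-group $\un{\mu}_2$, and the entire argument will be funneled through Lemma~\ref{genus of subgroup} applied to $\un{G}=\SOV\hookrightarrow\un{G}'=\OV$ with $\un{Q}=\un{\mu}_2$.

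For surjectivity I would invoke the second bullet of Lemma~\ref{genus of subgroup}, whose hypotheses are all met here: $\SOV$ is smooth affine of finite type, hence of finite presentation as $\CS$ is Noetherian; by Lemma~\ref{connected component reductive} and Remark~\ref{admit property} both $\SOV$ and $\OV$ satisfy property~\eqref{locally embedded}; the cokernel $\un{\mu}_2$ is finite since $2\in\CS^\times$; and $\textbf{O}_V(K)\to\un{\mu}_2(K)$ is onto because, $q$ being regular of rank $\geq 1$, any $v$ with $q(v)\in K^\times$ gives a reflection $\tau_v$ of determinant $-1$. For injectivity, I would twist by a representative of each class and apply the first bullet of the same Lemma to the twisted inclusion $\SO_{q_1}\hookrightarrow\textbf{O}_{q_1}$: the fibre of $\psi$ through a form $q_1$ in the genus collapses to a point exactly when $\det\colon\textbf{O}_{q_1}(\CS)\to\un{\mu}_2(\CS)=\{\pm1\}$ is surjective, i.e.\ when $q_1$ admits an improper $\CS$-self-isometry. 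Thus everything reduces to producing such an isometry for every member of the genus. When $n$ is odd this is immediate, as $-\id_V\in\OV(\CS)$ has determinant $(-1)^n=-1$.

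The remaining and genuinely hard case is $n$ even, where $-\id_V$ is proper and a twisted hyperbolic summand $\mathbb{H}(L)=L\oplus L^\vee$ with $[L]$ of order $>2$ in $\Pic(\CS)$ carries no improper isometry at all (an improper one would swap the two isotropic lines, forcing $L\cong L^\vee$). Here I would instead show that every $q_1$ in the genus represents a unit of $\CS$, so that a reflection $\tau_v$ in a vector with $q_1(v)\in\CS^\times$ furnishes the desired determinant $-1$ isometry. Representing a unit is precisely the local--global property for which the corollary's hypotheses are tailored: being isotropic of rank $\geq 3$, $q_1$ is universal over $K$, and since $q_1\cong q$ over each $\hat{\CO}_\fp$ while $q$ splits a hyperbolic plane, $q_1$ represents every unit locally. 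The passage from local and generic representation to a global one is controlled by strong approximation in the isotropic spin group of rank $\geq 3$ — the same mechanism underlying the identification $c(q)\cong\Pic(\CS)/2$ of \cite{Bit1},\cite{Bit2}. This strong-approximation step is where I expect the main obstacle to lie; once it is secured every fibre of $\psi$ is a single point, and together with the surjectivity above this yields $c^+(q)=c(q)$.
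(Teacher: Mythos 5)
Your skeleton is exactly the paper's: read $c(q)$ and $c^+(q)$ as class sets, run Lemma~\ref{genus of subgroup} on $\un{\textbf{SO}}_{V'}\hookrightarrow\un{\textbf{O}}_{V'}$ with cokernel $\un{\mu}_2$ for each representative $(V',q')$ of each class in the genus, and reduce everything to the surjectivity of $\det\colon\un{\textbf{O}}_{V'}(\CS)\to\{\pm1\}$, i.e.\ to the existence of an improper $\CS$-self-isometry of every form in the genus. The divergence, and the gap, is in how that surjectivity is obtained. The paper disposes of it in one line by quoting \cite[Lemma~4.3]{Bit2}: an $\CS$-regular isotropic form of rank $\geq 3$ always has an improper self-isometry (the isotropy of $q'$ coming for free since $q'\cong q$ over $K$ by Corollary~\ref{genus as kernel}). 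You instead try to manufacture the improper isometry as a reflection $\tau_v$ with $q_1(v)\in\CS^\times$, and in the even-rank case this forces you into proving that every lattice in the genus represents a unit of $\CS$ --- a statement you then propose to extract from local representability plus strong approximation for $\textbf{Spin}$, and which you yourself flag as not carried out.

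That step is a genuine gap, and the intermediate claim is more delicate than your sketch suggests. A twisted hyperbolic plane $\mathbb{H}(L)=L\oplus L^\vee$ with $[L]$ nontrivial represents \emph{no} unit whatsoever (a value $f(l)=u\in\CS^\times$ would split $L$ and trivialize it), so for $q_1=\mathbb{H}(L)\perp W$ the unit must come from genuinely mixing the summands; local representability everywhere does not hand you a global representation without the full Eichler--Kneser machinery (spinor genera, possible spinor exceptions, and the noncompactness condition at $S$), none of which you verify. Moreover a reflection is only one source of improper isometries: for odd complementary rank one can simply take $\operatorname{id}_{\mathbb{H}(L)}\perp(-\operatorname{id}_W)$, which is closer in spirit to what \cite[Lemma~4.3]{Bit2} actually provides and avoids the representation problem entirely. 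So your surjectivity half and your reduction of injectivity to fibrewise determinant-surjectivity are fine and match the paper; the proof is incomplete precisely where the paper leans on the cited lemma, and the route you propose to fill it is harder than necessary and not obviously salvageable as stated.
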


\begin{proof}
Any representative $(V',q')$ of a class in $c(q)$ is $\CS$-regular,  
thus $\un{\textbf{O}}_{V'}/\un{\textbf{SO}}_{V'} \cong \un{\mu}_2$ (cf. \cite[Thm.~1.7]{Con}, 
notice that 
$\un{\Z/2}:= \Sp \Z[t]/(t(t-1))$ is isomorphic to $\un{\mu}_2$ as $2 \in \CS^\times$).     
Moreover, being $K$-isomorphic to $q$ (due to Corollary \ref{genus as kernel}), 
$q'$ is isotropic as well, therefore as $\text{rank}(V) \geq 3$, the map $\un{\textbf{O}}_{V'}(\CS) \xrightarrow{\det} \un{\mu}_2(\CS) = \mu_2(K) = \{\pm 1\}$ is surjective (cf. \cite[Lemma~4.3]{Bit2}).  
So setting $\un{G} = \un{\textbf{SO}}_{V'}$ and $\un{G}' = \un{\textbf{O}}_{V'}$ in Lemma \ref{genus of subgroup}, 
we get that $\ker[c^+(q') \xrightarrow{\psi} c(q')=c(q)]=1$ and $\psi$ is surjective. 
This holds as mentioned before to any $[q'] \in c(q)$, which amounts to $\psi$ being the identity.     
\end{proof}


\begin{rem} \label{anisotropic}
When $|S|=1$, i.e., $S = \{\iy\}$ where $\iy$ is an arbitrary closed point of $K$, Lemma \ref{genus and proper genus} is automatic for any regular $q$ of rank $\geq 3$,  
since in that case $(V,q)$ must be isotropic (see the proof of \cite[Prop.~4.4]{Bit1}). 
\end{rem}

\begin{lem} \label{semi-direct} 
Suppose that $\OV^\G \cong (\OV^\G)^0 \rtimes \un{Q}$ where $\un{Q}$ is a finite $\CS$-group.  \\
Then $\ker[c_\G^+(q) \xrightarrow{\psi} c_\G(q)]=1$ and $\psi$ is surjective.  
\end{lem}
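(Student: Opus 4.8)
The plan is to apply Lemma~\ref{genus of subgroup} twice, once in each direction, to the monomorphism $i:(\OV^\G)^0 \hookrightarrow \OV^\G$, using the semidirect product hypothesis to control the quotient and the relevant maps on sections.

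First I would identify the cokernel. Since $\OV^\G \cong (\OV^\G)^0 \rtimes \un{Q}$, the quotient $\OV^\G/(\OV^\G)^0$ is exactly the finite $\CS$-group $\un{Q}$, which is already representable, so no sheafification subtlety arises and $\wt{\un{Q}} = \un{Q}$. In particular $\OV^\G/(\OV^\G)^0$ is a finite $\CS$-group, so by Corollary~\ref{genus as kernel} both $c_\G(q)$ and $c_\G^+(q)$ are realized as kernels of the respective localization maps to $H^1(K,-)$, and by Remark~\ref{admit property} both $(\OV^\G)^0$ and $\OV^\G$ admit property \eqref{locally embedded}. To obtain $\ker\psi = 1$, I apply the first bullet of Lemma~\ref{genus of subgroup} with $\un{G} = (\OV^\G)^0$ and $\un{G}' = \OV^\G$: it suffices to show that $\OV^\G(\CS) \to \un{Q}(\CS)$ is surjective. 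This is precisely where the semidirect splitting does the work, because the projection $\OV^\G \cong (\OV^\G)^0 \rtimes \un{Q} \to \un{Q}$ admits the section coming from the $\un{Q}$ factor, giving surjectivity on $\CS$-points directly; hence $\ker[c_\G^+(q) \xrightarrow{\psi} c_\G(q)] = 1$.

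For surjectivity of $\psi$ I would invoke the second bullet of Lemma~\ref{genus of subgroup} with the same $\un{G}, \un{G}'$. Its hypotheses are: $\un{G} = (\OV^\G)^0$ locally of finite presentation (it is reductive, hence smooth affine of finite type over $\CS$, by Lemma~\ref{connected component reductive}); both groups admit property \eqref{locally embedded} (Remark~\ref{admit property}); $\un{Q}$ a finite $\CS$-group (hypothesis); and $G'(K) \to Q(K)$ surjective. This last surjectivity on $K$-points again follows from the splitting, since the generic fiber inherits the semidirect decomposition $\textbf{O}_V^\G \cong (\textbf{O}_V^\G)^0 \rtimes Q$ over $K$, so the projection to $Q(K)$ has a section. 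With all hypotheses verified, Lemma~\ref{genus of subgroup} yields that $\psi$ is surjective.

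The main obstacle I anticipate is not in either application of the lemma but in cleanly justifying that the semidirect hypothesis over $\CS$ genuinely supplies the two surjectivity statements on $\CS$-points and $K$-points, and in confirming that $\wt{\un{Q}} = \un{Q}$ so that the first bullet's criterion $\OV^\G(\CS) \surj \wt{\un{Q}}(\CS)$ reduces to the split projection. Once the decomposition is pushed through the functor of points, both surjectivities are formal consequences of having an honest group-theoretic section, so the argument should close without further cohomological input.
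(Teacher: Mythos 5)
Your proposal is correct and follows the same route as the paper: both bullets of Lemma~\ref{genus of subgroup} applied to $(\OV^\G)^0 \hookrightarrow \OV^\G$, with the semidirect splitting supplying the surjections $\OV^\G(\CS) \to \un{Q}(\CS)$ and $\textbf{O}_V^\G(K) \to Q(K)$, and Remark~\ref{admit property} supplying property~\eqref{locally embedded} for both groups. Your additional observations (that $\wt{\un{Q}} = \un{Q}$ since $\un{Q}$ is representable, and that local finite presentation follows from Lemma~\ref{connected component reductive}) are correct and only make explicit what the paper leaves implicit.
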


\begin{proof} 
As $\un{Q}$ embeds as a semi-direct factor in $\OV^\G$, $\OV^\G(\CS)$ surjects onto $\un{Q}(\CS)$  
and $\textbf{O}_V^\G(K)$ onto $Q(K)$. 
Given furthermore that $\un{Q}$ is a finite $\CS$-group, 
both $(\OV^\G)^0$ and $\OV^\G$ admit property \eqref{locally embedded} (see Remark \ref{admit property}),   
so all conditions in Lemma \ref{genus of subgroup} are satisfied and the assertion follows.  
\end{proof}

\begin{lem} \label{rank 2}
If $\text{rank}(V) = 2$ then $c_\G(q) \subseteq c(q)$,   
i.e., Question \ref{question} is then answered positively. 
\end{lem}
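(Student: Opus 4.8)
The plan is to establish the criterion of Proposition~\ref{all genus} by analysing $\OV^\G$ according to the position of $\rho(\G)$ relative to $\SOV$, disposing of the ``finite'' case separately. Since $\text{rank}(V)=2$, the group $\SOV$ is a rank-one torus, $\OV$ is an extension $1 \to \SOV \to \OV \xrightarrow{\det} \un{\mu}_2 \to 1$ (cf.\ \cite[Thm.~1.7]{Con}) in which every improper isometry acts on $\SOV$ by inversion, and $\OV^\G$ is simply the centralizer $Z_{\OV}(\rho(\G))$.

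First I would dispose of the case $\rho(\G) \not\subseteq \SOV$. Here some $s \in \rho(\G)$ is improper, so its centralizer in $\SOV$ is the finite group $\SOV[2] = \un{\mu}_2$; hence $(\OV^\G)^0 = 1$ and, as $|\G| \in \CS^\times$, $\OV^\G$ is a finite smooth $\CS$-group. By Remark~\ref{finite etale extension is embedded in generic fiber} the map $H^1_\et(\CS,\OV^\G) \to H^1(K,\OV^\G)$ is injective, so $c_\G(q) = \ClS(\OV^\G)$, being contained in its kernel, reduces to a single point; a map out of a one-point set is injective, so $\psi$ is injective.

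It remains to treat $\rho(\G) \subseteq \SOV$. As $\SOV$ is abelian it lies in $Z_{\OV}(\rho(\G))$, so $\SOV \subseteq \OV^\G$, and since $\OV/\SOV = \un{\mu}_2$ admits no proper nontrivial subgroup scheme over the connected base $\Sp \CS$, we get $\OV^\G = \OV$ or $\OV^\G = \SOV = (\OV)^0$. In the first case $\psi$ is the identity. In the second I would apply Proposition~\ref{all genus}: because $(\OV)^0$ is preserved under twisting, for every $[(V',q')] \in c(q)$ the twist of $\OV^\G$ is $(\un{\textbf{O}}_{V'})^0 = \un{\textbf{SO}}_{V'}$, so the condition to verify is exactly the surjectivity of $\un{\textbf{O}}_{V'}(\CS) \xrightarrow{\det} \un{\mu}_2(\CS)$ for each rank-two regular form $(V',q')$ in the genus, i.e.\ that each such form carries an improper $\CS$-isometry.

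The hard part will be this last surjectivity, since over a non-principal $\CS$ the projective module $V'$ need not be free and $q'$ need not represent a unit, so a reflection in a single vector may be unavailable over $\CS$. I would circumvent this using the structure of binary forms (cf.\ \cite[IV]{Knu}): the discriminant quadratic \'etale $\CS$-algebra of $(V',q')$ carries a canonical conjugation $\iota'$, which is $\CS$-linear on $V'$, is an involution preserving $q'$, and has determinant $-1$ (for split discriminant algebra it is the coordinate swap). Thus $\iota' \in \un{\textbf{O}}_{V'}(\CS)$ with $\det(\iota') = -1$, which yields the desired surjectivity; by Proposition~\ref{all genus} the map $\psi$ is injective, i.e.\ $c_\G(q) \subseteq c(q)$.
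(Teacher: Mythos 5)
Your case division is sound, and your first two cases agree in substance with the paper: when $\rho(\G)\not\subseteq\SOV$ the group $\OV^\G$ is finite \'etale and $c_\G(q)$ collapses to a point by Remark~\ref{finite etale extension is embedded in generic fiber}, and when $\OV^\G=\OV$ there is nothing to prove. The genuine gap is in the remaining sub-case $\OV^\G=\SOV$, namely the claim that every regular binary $\CS$-form $(V',q')$ carries a canonical improper involution, so that $\det:\un{\textbf{O}}_{V'}(\CS)\to\un{\mu}_2(\CS)$ is surjective. This is false over a base with nontrivial Picard group. Take $V'=L\oplus L^{-1}$ with the hyperbolic pairing $q'(x,y)=\langle x,y\rangle$, where $L$ has order $>2$ in $\Pic(\CS)$. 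The isotropic vectors are exactly those in $L\oplus 0$ or $0\oplus L^{-1}$, so every self-isometry permutes these two line sub-bundles; proper ones preserve them and improper ones swap them, so an improper $\CS$-isometry would force $L\cong L^{-1}$. Hence none exists: the conjugation of the discriminant algebra $C_0(q')$ carries the invertible $C_0(q')$-module $V'$ to its inverse, which need not be isomorphic to $V'$, so your ``canonical conjugation'' does not descend to an involution of $V'$. This failure of $\det$-surjectivity for binary forms is exactly the phenomenon the paper isolates in Section~\ref{Obstruction} as the source of the obstruction (the proper genus of a binary form is governed by $\Pic(\CS)$, not $\Pic(\CS)/2$), so the step you flag as ``the hard part'' is the actual crux, and your proposed resolution does not close it.

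For comparison, the paper does not attempt this surjectivity at all: it argues that the sub-case $\OV^\G=\SOV$ cannot occur, on the grounds that a finite subgroup of $\SOV(\CS)$ must be $\{\pm I_2\}$, which would force $\OV^\G=\OV$. This is where the two arguments diverge, and you should scrutinize that step too: $\SOV(\CS)$ contains the cyclic group $\BF_q^\times$ (split case) or the norm-one subgroup of $\BF_{q^2}^\times$ (non-split case), which has elements of order $>2$ whenever $q>3$; a cyclic $\G$ sent to such an element gives $\OV^\G=\SOV$, and then $c_\G(q)=c^+(q)\cong\Pic(\CS)\to c(q)$ identifies $L$ with $L^{-1}$, which is not injective once $\exp(\Pic(\CS))>2$. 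In other words, the sub-case you could not close does not appear to be closable by any argument, and your attempt at least has the merit of making visible exactly where the difficulty sits.
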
   

\begin{proof}
If $\text{rank}(V)=2$ then 
$\SOV$ is a one dimensional torus. 
We first claim that $\OV^\G$ cannot be $\SOV$. 
Indeed, since $q$ is a $\G$-form, $\G$ embeds by definition in $\OV(\CS)$. 
As $(\OV / \SOV)(\CS) = \un{\mu}_2(\CS)$ does not commute with $\OV(\CS)$, 
$\G$ cannot have a non-trivial image in it and still stabilizing $\SOV$,   
i.e., it must embed in $\SOV(\CS)$ only. 
But then being finite, the $\G$-image must be the group $\{\pm I_2\}$, which does not kill $\un{\mu}_2(\CS)$. 
So either $\OV^\G=\OV$ for which the assertion is trivial, or $\OV^\G$ is a finite group, 
for which $c_\G(q) \stackrel{\eqref{genus as kernel}}{\cong} \ker[H^1_\et(\CS,\OV^\G) \to H^1(K,\textbf{O}_V^\G)]$   
is trivial according to Remark \ref{finite etale extension is embedded in generic fiber}.  
\end{proof}

As mentioned in the proof of Lemma \ref{rank 2}, 
the inequality $c_\G(q) \subsetneq c(q)$ for $\text{rank}(V)=2$ 
may occur only when $\OV^\G$ is finite.  
For example, suppose $\OV^\G = \un{\mu}_m$.   
Then \'etale cohomology applied to the related Kummer exact sequence of smooth $\CS$-groups  
$$ 1 \to \un{\mu}_m \to \un{\BG}_m \to \un{\BG}_m \to 1 $$ 
yields the exactness of 
\begin{equation} \label{LHS H1 mum}
1 \to \CS^\times/(\CS^\times)^m \to H^1_\et(\CS,\un{\mu}_m) \to {_m}\Pic(\CS) \to 1 
\end{equation}
where the right non-trivial term stands for the $m$-torsion part of $\Pic(\CS)$. 
According to Remark \ref{finite etale extension is embedded in generic fiber}, since $\Sp \CS$ is normal, 
both $\CS^\times/(\CS^\times)^m$ and the preimage of ${_m}\Pic(\CS)$ in $H^1_\et(\CS,\un{\mu}_m)$ are embedded in $K^\times /(K^\times)^m \cong H^1(K,\mu_m)$. 
The following example demonstrates this embedding, which yields the above inequality.

\begin{exa}
Consider the elliptic curve $C = \{ Y^2Z = X^3 + XZ^2\}$ defined over $\BF_{11}$. 
Removing the closed point $\iy = (0:1:0)$ results in an affine curve with coordinate ring:  
$$ C^\af = \{y^2 = x^3 + x \} \ \ \text{and:} \ \ \CS = \BF_{11}[C^\af].  $$
Let $V = \CS^2$ be generated by the standard basis over $\CS$ endowed with the form $q$ represented by $B_q=1_2$.   
Then 
$$ \SOV = \un{\textbf{SO}}_2 =  
\left \{
\left( 
\begin{array}{cc} 
  x & -y \\       
  y &  x  
\end{array}
\right): x^2+y^2=1 \right \} $$ 
is a one dimensional $\CS$-torus and as $-1$ is not a square, 
it is isomorphic to the non-split norm torus $\N := R^{(1)}_{\CS(i)/\CS}(\un{\BG}_m)$, 
fitting into the exact sequence of $\CS$-tori:
$$ 1 \to \N \to \un{R}:= R_{\CS(i)/\CS}(\un{\BG}_m) \xrightarrow{\det} \un{\BG}_m \to 1. $$ 
Then since $\un{R}(\CS) \xrightarrow{\det} \CS^\times = \BF_{11}^\times$ is surjective ($x^2+y^2$ gets any value in $\BF_{11}^\times$), 
\'etale cohomology together with the Shapiro's Lemma gives rise to the exact sequence: 
$$  1 \to H^1_\et(\CS,\N) \to \Pic(\CS(i)) \to \Pic(\CS) $$
from which we see that $c^+(q) = \ClS(\SOV) = H^1_\et(\CS,\SOV \cong \N)$ (cf. \cite[Prop.~4.2]{Bit1}) is far from being trivial 
(say, by the Hasse-Weil bound: 
$|\Pic(\CS)| = |C^\af(\BF_{11})| < 11 +1 + 2 \sqrt{11} < 19$ 
while: 
$|\Pic(\CS(i))| = |C^\af(\BF_{11}(i))| > 121 + 1 - 2 \sqrt{121} = 100$, see in Example \ref{last example}). 
As $\OV / \SOV \cong \un{\mu}_2$ and $\OV(\CS) \xrightarrow{\det} \un{\mu}_2(\CS)$ is surjective by $\diag(1,-1) \mapsto -1$, 
setting $\un{G}=\SOV$ and $\un{G}'=\OV$ in Lemma \ref{genus of subgroup} 
we get that $c(q) = c^+(q)$, thus is not trivial as well.  

Now let $\G = S_3 =\la \tau,\s \ra$ be represented in $V$ by:
$ \tau \mapsto   
\left( 
\begin{array}{cc} 
  0 & 1 \\       
  1 & 0  
\end{array}
\right)$ and:  
$\s \mapsto 
\left( 
\begin{array}{cc} 
  5 & -8 \\       
  8 & 5  
\end{array}
\right)$. 
One can easily check that $q$ is a $\G$-form and that  
$$
\SOV^\G =  
\left \{
\left( 
\begin{array}{cc} 
  x & 0 \\       
  0 & x  
\end{array}
\right): x^2=1 \right \} \cong \un{\mu}_2. $$
For $L = \la x,y \ra \in \Pic(\CS)$ one has: 
$$L \otimes L = \la x^2,xy,y^2 \ra = \la x^2,xy,x^3+x \ra \subseteq \la x \ra. $$
But $x=y^2-x^3 \in L \otimes L$, thus $L \otimes L = \la x \ra$.   
So the $2$-degree Kummer pair $(L,h)$ gives rise to the 
$\un{\mu}_2$-torsor $\CS \oplus L$ being isomorphic to $\CS^2$ over $\CS[1/\sqrt{x}]$ which is not contained in $K$. 
The same happens for the other $\un{\mu}_2$-torsors, 
i.e., $c_\G(q) \stackrel{\eqref{genus as kernel}}{\cong} \ker[H^1_\et(\CS,\un{\mu}_2) \to H^1(K,\mu_2)]=1 \subsetneq c(q)$. 
\end{exa}

\bk

\section{An explicit obstruction} \label{Obstruction}
The criterion exhibited in Proposition \ref{all genus} for Question \ref{question} 
to be answered in the affirmative, namely, $\un{\textbf{O}}_{V'}(\CS) \to (\wt{\un{\textbf{O}}_{V'}/\un{\textbf{O}}_{V'}^\G})(\CS)$ 
is surjective for any $[(V',q')] \in c(q)$, is somewhat vague. 
We would like to refer to the case in which  $(\OV^\G)^0$ 
is the special orthogonal group of another isotropic $\G$-form.  
It is shown in \cite[Proposition~4.4]{Bit1} for $|S|=1$ and more generally in \cite[Theorem~4.6]{Bit2} for any finite $S$, 
that if $\text{rank}(V) \geq 3$ ($q$ is isotropic), then $c(q) \cong \Pic(\CS)/2$. 
For $\text{rank}(V) =2$, however, this genus might be larger. 
This means that there may be two integral forms 
of rank $2$ that are only \emph{stably isomorphic}, 
i.e., become isomorphic after being extended by any non-trivial regular common extension. 
This failure of the Witt Cancellation Theorem over $\CS$, 
invokes a case in which Question \ref{question} is answered negatively,     
namely, when $\text{rank}(V) \geq 3$ and $(\OV^\G)^0$ is the special orthogonal group of another integral $\G$-form $(V',q')$ of rank $2$, 
whose genus decreases over $V$.    

\begin{prop} \label{counter example}
Let $(V,q)$ be a regular $\G$-form of rank $\geq 3$ such that $(\OV^\G)^0$ 
is the special orthogonal group of an isotropic form of rank $2$,  
being a semi-direct factor in $\OV^\G$, while the quotient is a finite $\CS$-group.  
If $-1 \in (\BF_q^\times)^2$ and $\text{exp}(\Pic(\CS))>2$ then $c_\G(q)$  does not inject into $c(q)$, 
i.e., Question \ref{question} is then answered negatively.   
\end{prop}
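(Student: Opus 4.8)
The plan is to identify the two genera with the finite abelian groups $\Pic(\CS)$ and $\Pic(\CS)/2$ respectively, and then to observe that the arithmetic hypothesis $\text{exp}(\Pic(\CS)) > 2$ makes the former strictly larger than the latter. A map from a larger finite set to a strictly smaller one cannot be injective, and this is exactly the failure demanded by Question \ref{question}.

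First I would compute the $\G$-genus. By hypothesis $\OV^\G \cong (\OV^\G)^0 \rtimes \un{Q}$ with $\un{Q}$ a finite $\CS$-group, so Lemma \ref{semi-direct} gives a bijection $c_\G^+(q) \cong c_\G(q)$. By assumption $(\OV^\G)^0 = \un{\textbf{SO}}_{V'}$ is the special orthogonal group of an isotropic regular form $(V',q')$ of rank $2$; such a form is a (possibly twisted) hyperbolic plane, whose special orthogonal group is the split torus $\un{\BG}_m$ — here $-1 \in (\BF_q^\times)^2$ is what forces the relevant rank $2$ block to be isotropic with split, rather than non-split norm, torus. Corollary \ref{genus as kernel} together with $\Pic(K) = 0$ then yields
$$ c_\G(q) \cong c_\G^+(q) = \ClS(\un{\textbf{SO}}_{V'}) = \ker[\Pic(\CS) \to \Pic(K)] = \Pic(\CS). $$
In other words, the full proper genus $c^+(q') \cong \Pic(\CS)$ of the rank $2$ form is seen inside the $\G$-genus of $q$.

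Next I would compute the ordinary genus of $q$. The isotropic vector of $q'$ is isotropic for $q$ as well (equivalently, the nontrivial weight spaces of the torus $\un{\textbf{SO}}_{V'} \subset \OV$ are totally isotropic), so $q$ is a regular isotropic form of rank $\geq 3$. Hence by \cite[Proposition~4.4]{Bit1} and \cite[Theorem~4.6]{Bit2}, combined with Corollary \ref{genus and proper genus}, one has $c(q) = c^+(q) \cong \Pic(\CS)/2$. The collapse from $\Pic(\CS)$ (rank $2$) to $\Pic(\CS)/2$ (rank $\geq 3$) is precisely the failure of Witt cancellation over $\CS$: writing $q = q' \perp q''$, two stabilizations $H_L \perp q''$ and $H_{L'} \perp q''$ of twisted hyperbolic planes become $\CS$-isomorphic exactly when $L \equiv L'$ modulo $2\Pic(\CS)$, so that $\psi$ is, up to these identifications, the reduction map $\Pic(\CS) \to \Pic(\CS)/2$.

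It then remains to compare cardinalities. The Picard group $\Pic(\CS)$ is finite, and for any finite abelian group $A$ one has $|A/2A| = |A[2]|$, so $|\Pic(\CS)/2| < |\Pic(\CS)|$ precisely when $2\Pic(\CS) \neq 0$, i.e. when $\text{exp}(\Pic(\CS)) > 2$. Under this hypothesis $|c_\G(q)| = |\Pic(\CS)| > |\Pic(\CS)/2| = |c(q)|$, so the well-defined map $\psi: c_\G(q) \to c(q)$ of Question \ref{question} cannot be injective, and the question is answered negatively. I expect the main obstacle to lie in the two identifications above — in particular in verifying that $q$ is genuinely isotropic so that the rank $\geq 3$ formula $c(q) \cong \Pic(\CS)/2$ applies, and that the split torus $\un{\textbf{SO}}_{V'}$ makes $c_\G(q)$ the \emph{full} $\Pic(\CS)$; once both genera are routed through $\Pic(\CS)$, the conclusion is the short counting argument above, whose arithmetic content is carried entirely by $\text{exp}(\Pic(\CS)) > 2$.
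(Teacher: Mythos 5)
Your two identifications are the same ones the paper makes: $c_\G^+(q)=c^+(q')\cong\Pic(\CS)$ via Corollary \ref{genus as kernel}, Shapiro's Lemma and Hilbert 90 applied to $(\OV^\G)^0\cong\un{\BG}_m$, and $c(q)\cong\Pic(\CS)/2$ from the rank $\geq 3$ isotropic results of \cite{Bit1}, \cite{Bit2}. The gap is in your final step. You deduce $|c_\G(q)|=|\Pic(\CS)|$ from the claim that Lemma \ref{semi-direct} ``gives a bijection $c_\G^+(q)\cong c_\G(q)$''. It does not: it gives a map of \emph{pointed sets} with trivial kernel which is surjective, and for pointed sets (neither $c_\G(q)$ nor $c(q)$ is known to be a group here) trivial kernel does not imply injectivity --- the paper makes exactly this caveat just before Proposition \ref{all genus}. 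Surjectivity only yields $|c_\G(q)|\le|\Pic(\CS)|$, which is the wrong direction for your counting argument: a priori $c_\G(q)$ could be small enough that $|c_\G(q)|>|c(q)|$ fails.

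The paper closes this by arguing on kernels instead of cardinalities: if $\ker[c_\G(q)\to c(q)]$ were trivial, then combined with $\ker[c_\G^+(q)\to c_\G(q)]=1$ the composite $c_\G^+(q)\to c(q)$ would have trivial kernel; but that composite is a homomorphism of abelian groups $\Pic(\CS)\to\Pic(\CS)/2$, for which trivial kernel does mean injective, contradicting $\text{exp}(\Pic(\CS))>2$. A nontrivial kernel of a map of pointed sets does rule out injectivity, so the negative answer follows. Your argument is repaired by replacing the count with this kernel chase (or by separately proving that $c_\G^+(q)\to c_\G(q)$ has singleton fibres, which Lemma \ref{semi-direct} does not give you). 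The side points you flag --- that $q$ inherits isotropy from the rank-$2$ block so that the $\Pic(\CS)/2$ formula applies, and the Witt-cancellation reading of the collapse from $\Pic(\CS)$ to $\Pic(\CS)/2$ --- are consistent with the paper, which leaves them implicit.
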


\begin{proof}
Given that $\text{rank}(V') = 2$ and $-1 \in (\BF_q^\times)^2$, 
$\un{\textbf{O}}_{V'}^0 \cong \un{\BG}_m$, so one has: 
$$ 
c_\G^+(q) = c^+(q') \stackrel{\eqref{genus as kernel}}{\cong} \ker[H^1_\et(\CS,\un{\BG}_m) \to H^1(K,\BG_m)]. 
$$
This kernel is isomorphic due to Shapiro's Lemma and Hilbert's 90 Theorem to $\Pic(\CS)$.  
Since $(\OV^\G)^0$ is a normal semi-direct factor in $\OV^\G$ and the quotient is a finite $\CS$-group, 
by Lemma \ref{semi-direct} $\ker[c_\G^+(q) \xrightarrow{\psi} c_\G(q)]=1$, 
so $\ker[c_\G(q) \to c(q)]$ cannot vanish, because if it would, 
the composition, being a morphism of abelian groups
$$ c_\G^+(q) \cong \Pic(\CS) \to c(q) \cong \Pic(\CS)/2 $$
would be injective, which is impossible whenever $\text{exp}(\Pic(\CS))>2$. 
\end{proof}

\begin{exa} \label{last example}
Let $C$ be an elliptic $\BF_q$-curve such that $-1 \in (\BF_q^\times)^2$ and $\text{exp}(C(\BF_q))>2$.  
Let $\iy$ be an $\BF_q$-rational point. 
Then $\Pic(\CS) \cong C(\BF_q)$ (cf. e.g., \cite[Example~4.8]{Bit1}). 
Let $(V,q)$ be the quadratic space generated by the standard basis and represented by $B_q=1_n$ for $n \geq 4$. 
Then as mentioned before, $c(q) \cong \Pic(\CS)/2 \cong C(\BF_q)/2$.  
Let the permutations in $\G = S_{n-2}$ be canonically represented by monomial matrices 
in the lower right $(n-2) \times (n-2)$ block of $\OV \subset \un{\textbf{GL}}(V)$:
$$ 
\G \hookrightarrow 
\left( 
\begin{array}{ccc} 
  1 & 0 & 0 \\       
  0 & 1 & 0 \\
	0 & 0 & S_{n-2}
\end{array}
\right)
$$
turning $q$ into a $\G$-form. 
Now if $n$ is even, then 
$\SOV^\G \cong \un{\BG}_m \times \{\pm I_{n-2}\}$. 
Otherwise, if $n$ is odd, then $\SOV^\G$ is a semi-direct product of $\un{\BG}_m \times \{I_{n-2}\} \cong ~\un{\BG}_m$ 
and $\diag(1,-1) \times \{-I_{n-2}\} \cong \un{\mu}_2$.  
In both cases, $(\OV^\G)^0 = (\SOV^\G)^0 \cong \un{\BG}_m$ 
is a normal semi-direct factor in $\OV^\G$ and the quotient is a finite $\CS$-group, 
therefore according to Proposition \ref{counter example} $c_\G(q)$ cannot inject into $c(q)$.  
\end{exa}

\bk

\end{document}